\newtheorem{theorem}{\textbf{\textsc{Theorem}}}[section]
\newtheorem{definition}[theorem]{\textbf{\textsc{Definition}}}
\newtheorem{lemma}[theorem]{\textbf{\textsc{Lemma}}}
\newtheorem{corollary}[theorem]{\textbf{\textsc{Corollary}}}
\newtheorem{proposition}[theorem]{\textbf{\textsc{Proposition}}}
\newenvironment{proof}
{\noindent\mbox{\textsf{\textbf{\textsc{Proof}}}:}}
{\hfill{\scriptsize \mbox{\underline{\texttt{\em QED}\,}$\!\big|$}}\bigskip}
\title{On a fallacy concerning I-am-unprovable  sentences:\\ what to take home from G\"odel's introduction}
\author{
{\sc Kaave \ Lajevardi}\\[-1ex]
\textrm{\large La Soci\'et\'e des Philosophes Ch\^{o}meurs, P.O.Box~13197-73587, T\'eh\'eran, IRAN.}\\[-1ex]
\textrm{\large E-mail:~\textsf{ kaave.lajevardi@gmail.com}}\\
{\sc Saeed \  Salehi  } \\[-1ex]
\textrm{\large Research Center of Biosciences \& Biotechnology (RCBB),   University of Tabriz,}\\[-1ex] \textrm{\large   P.O.Box~51666--16471, Tabriz, IRAN.  \, E-mail:~\textsf{root@SaeedSalehi.ir}} }
\begin{document}

\maketitle

\pagestyle{fancy}
\lhead[ ]{\thepage.\quad{\sc K. Lajevardi \& S. Salehi}}
\chead[ ]{ }
\rhead[ ]{ {\em On a Fallacy Concerning I-Am-Unprovable  Sentences} }
\lfoot[ ]{ }
\cfoot[ ]{ }
\rfoot[ ]{ }

\begin{abstract}

\vspace{-1em}
We demonstrate that, in itself and in the absence of extra premises, the following argument scheme is fallacious: {\em The sentence A says about itself that it has a certain property F, and A does in fact have the property F; therefore A is true.} We then examine an argument of this form in the informal introduction of G\"odel's classic (1931) and examine some auxiliary premises which might have been at work in that context. Philosophically significant as it may be, that particular informal argument plays no r\^{o}le in G\"odel's technical results.
Going deeper into the issue and investigating truth conditions of G\"odelian  sentences (i.e., those sentences which are provably equivalent to their own unprovability) will provide us with insights regarding the philosophical debate on the truth of G\"odelian  sentences of systems---a debate
which is at least as old as Dummett (1963).

\bigskip

\noindent
{\bf Keywords}:
Incompleteness, G\"odelian Sentences, Truth, Self-reference, Soundness, Consistency, $\omega$-Consistency.

%
\end{abstract}
%


\section{Introduction.}\label{sec:intro}
Consider the following argument scheme, wherein boldface italic letters are placeholders for sentences and predicates of a sufficiently rich language:
\begin{itemize}\itemindent=1em
\item[$(1)$] $\textit{\textbf{A}}$ says about itself that it is an $\textit{\textbf{F}}$
\item[$(2)$] $\textit{\textbf{A}}$ is indeed an $\textit{\textbf{F}}$
\item[]   {\em therefore}
\item[$(3)$] $\textit{\textbf{A}}$ is true.
\end{itemize}
Harmless and perhaps trifling as the scheme $(1)$-$(2)$-$(3)$ may look, we will argue that it is in fact invalid: there are instances of $\textit{\textbf{A}}$ and $\textit{\textbf{F}}$ which make premises $(1)$ and $(2)$ true while the conclusion $(3)$ false. We will demonstrate the invalidity of the scheme and draw some morals hopefully attractive to the philosophical community.\footnote{So far as we know, the fallacy was first exposed in \cite{LajevardiSalehi2019}. That work does not offer any diagnosis of the fallacy and minimizes the exegesis of \cite{Godel1931}. While we sharpen some of the results of \cite{LajevardiSalehi2019} and apply them to the philosophical debate on the truth of G\"odelian sentences, we do not presume any knowledge of that paper.}

What is likely to make this fallacy of more interest is that it seems to have been committed by a logician no less than G\"odel, and then somewhere in the introduction to his ground-breaking 1931 paper---but let us add immediately that we are perfectly aware that the fallacy is of no consequence to G\"odel's technical results. That this {\em is} a fallacy is demonstrated in Section \ref{sec:details} below; whether or not {\em G\"odel} has committed it is somehow a matter of judgement and depends on what one is willing to read into G\"odel's text (see \ref{subsec:omega} below). At any rate, we think awareness of the fallacy provides us with new insights concerning arguments about the truth of G\"odelian sentences of axiomatic systems.

In what follows, we first (\S\ref{sec:inval}) give a rough exposition of why the $(1)$-$(2)$-$(3)$ scheme fails to be valid, followed by technical details in \S\ref{sec:details}. Section \ref{sec:classic} is an exegetical discussion of G\"odel's 1931 paper. Among other things, we consider a number of points that could be made in defence of G\"odel's use of a particular instance of the $(1)$-$(2)$-$(3)$ scheme. In \S\ref{sec:gsent}, we exploit our results to say something which we believe systematize a number of previous attempts by philosophers and logicians concerning the truth of self-referential sentences which assert their own unprovability. The Appendices 
present more abstract versions of some of the issues presented in the paper.

Let us make it explicit that here we are {\em not} dealing with self-referential sentences as such (see \ref{subsec:irrev} below). Rather, our subject matter is, first and foremost, those sentences which are provably equivalent in a suitable ambient theory to their own unprovability relative to that same theory---hence the `I-am-unprovable' of our title.

\section{Invalidation: an overview}\label{sec:inval}
We should, of course, answer two questions prior to embarking on the invalidation. Our answers in the present section are not meant to be rigorous or complete---we only wish to give a sense of what is going on.
\begin{itemize}\itemindent=1em
\item[$(\alpha)$] What is it for a sentence to say something about itself?
\item[$(\beta)$] What is it for a sentence to be true? In particular: What is it for a sentence to have
       a property  {\em indeed}?
\end{itemize}
In the context of modern mathematical logic, here are the answers: $(\alpha)$ a sentence says something about itself just in case a certain biconditional, containing the sentence and a predicate applied to a name of that sentence, is a theorem of the background theory of the context.\footnote{See subsections \ref{subsec:g} and \ref{subsec:irrev} below.} As for $(\beta)$, a sentence is true just in case it holds (i.e., satisfies Tarski's standard recursive definition) in a fixed designated structure. These will become clearer in the paragraph below. And how could $(1)$ and $(2)$ fail to imply $(3)$? The how-is-it-possible question is relatively easy to answer. Here is an overview:

What a sentence $\psi$ {\em says} needs an interpretative or translational work to tell, and all interpretation is theory-relative.\footnote{We use the term `theory' in the standard logical sense: a {\em theory} is a set (usually a recursively enumerable one) of formul{\ae}  none of which contain any free variables.} Any such work takes place in a theory $T$ in a given language, a language which either contains $\psi$ itself or has a means of referring to it or mentioning it, e.g., via having a name or a code for $\psi$. Now if, according to $T$, the sentence $\psi$ is equivalent to $\Delta(\#\psi)$, where $\#\psi$ is a name for $\psi$ in the language of $T$ and $\Delta$ is a predicate of the same language, we say that $\psi$ says about itself that it is a $\Delta$. Thus, looking at G\"odel's construction in his classic 1931 paper, we have a sentence $G$ such that ${\rm PM}$ (the system of {\em Principia Mathematica} of Russell and Whitehead) proves the biconditional $G\!\leftrightarrow\!\neg{\rm Pr}(\#G)$, where Pr is a standard provability predicate for ${\rm PM}$. It is in virtue of this that one thinks of $G$ as {\em saying about itself that it is unprovable}. If the context makes it clear what the background theory is, or if the discussion is carried out rather informally and the theory is more or less the set of all we take for granted in that discussion, the theory is left unspecified.

Suppose that our background theory $T$ is not ``sound''. That is to say, suppose that some of the theorems of $T$ are not true, where {\em true} is relative to a given structure as said in $(\beta)$ above.\footnote{Normally, when one is talking about the sentences of the language of arithmetic, `true' simpliciter means true in the standard model $\mathbb{N}$ of natural numbers---see e.g., \cite[p.~145$n$4]{Godel1931} 
quoted below in \ref{subsec:indeed}.} It may so happen that amongst the false theorems of $T$, one is of the form $\psi\!\leftrightarrow\!\Delta(\#\psi)$. If so, then, according to $T$, the sentence $\psi$ says about itself that it is a $\Delta$. Our unsound theory is just in error about what $\psi$ ``really says''.

Now if we somehow manage to make all these happen for a false $\psi$ which is {\em in fact} a $\Delta$ (i.e., in such a way that $\Delta(\#\psi)$ is true in $\mathbb{N}$), we are done with showing the invalidity of $(1)$-$(2)$-$(3)$: $\psi$ says about itself that it is a $\Delta$ (in the eye of $T$), and $\psi$ is indeed a $\Delta$; however, $\psi$ is false. Hence the invalidity.

\section{Invalidation: the details.}\label{sec:details}
We show how to find a triplet of a theory $T$, a sentence $\psi$, and a predicate $\Delta$ as a counterexample to the $(1)$-$(2)$-$(3)$ scheme.

\begin{itemize}\itemindent=2.5em
  \item[{\bf Step 1.}] Let $T$ be a consistent but unsound theory in the language of arithmetic.\footnote{A well known example of such theory is Peano's Arithmetic (${\rm PA}$) to which a statement of the inconsistency of ${\rm PA}$ is added---that is to say,  ${\rm PA}+\neg{\rm Con(PA)}$ is consistent (by G\"odel's second incompleteness theorem) but not sound {\em if} ${\rm PA}$ is consistent to begin with.
 \par \quad
For an inconsistent $T$, it is all too easy to invalidate the $(1)$-$(2)$-$(3)$ argument scheme. In \ref{subsec:omega} we will invalidate $(1)$-$(2)$-$(3)$ even for $\omega$-consistent theories.} For reasons which need not concern us here, it is a normal practice today to assume that $T$ is an extension of Robinson's Arithmetic  $\textit{Q}$.\footnote{$\textit{Q}$ is basically a first-order description of the algebraic properties of $+, \times, 0$, and $1$ in $\mathbb{N}$, with no axiom scheme of induction. This theory is known to be more than enough for the purpose of proving the first incompleteness theorem.} As $T$ is unsound, there is a false sentence $\varphi$ which is provable in $T$; i.e., there is a $\varphi$ with $T\vdash\varphi$ and $\mathbb{N}\nvDash\varphi$. Let $\Delta(x)$ be any formula whatsoever with exactly one free variable $x$. Apply the celebrated diagonal lemma \cite[p.~173]{BoolosJeffrey1989} to the formula $\varphi\!\leftrightarrow\!\Delta(x)$, to get a sentence $\psi$ such that\footnote{Cf. \cite[p.~238]{McGee1992} and \cite[p.~128]{Cook2006}.} $$\textit{Q} \vdash  \psi\!\leftrightarrow\![\varphi\!\leftrightarrow\!
\Delta(\#\psi)],$$ where $\#\psi$ is the canonical term for the G\"odel number of $\psi$. By propositional logic and our assumption that $T \vdash\varphi$, we have
$$(\ast) \quad T \vdash \psi\!\leftrightarrow\!\Delta(\#\psi).$$
On the other hand, by the well known textbook proof of the diagonal lemma as presented in \cite{BoolosJeffrey1989}, we have
$$\mathbb{N}\vDash\psi\!\leftrightarrow\![\varphi
\!\leftrightarrow\!\Delta(\#\psi)],$$
so that, because of the falsity of $\varphi$ in $\mathbb{N}$, we get
$$(\ast\!\ast) \quad  \mathbb{N}\vDash\psi\!\leftrightarrow\!\neg
\Delta(\#\psi).$$
  \item[{\bf Step 2.}] For every $\Delta$, our ($\ast$) enables us to make the first premise $(1)$ true. We need to take a bit of care to make sure that $(2)$ becomes true and $(3)$ false, and $(\ast\!\ast)$ helps us in taking care of them simultaneously. An obvious choice would be taking $\Delta(x)$ to be any tautological predicate like `$x=x$'\footnote{Cf. \cite{Leitgeb2002}.}, as $\Delta(\#\psi)$ will then hold up in $\mathbb{N}$ so that the invalidation will be completed because of $(\ast\!\ast)$. However, as tautologies are perhaps not immensely interesting, let us offer two other options, namely let $\Delta_1(x)$ be ${\rm Pr}_T(x)$, and let $\Delta_2(x)$ be $\neg{\rm Pr}_T(x)$, where ${\rm Pr}_T$ is a provability predicate for $T$.
  It is a pair of nice little exercises to show that the resulting sentences $\psi_1$ and $\psi_2$ are such that both $\Delta_1(\#\psi_1)$ and $\Delta_2(\#\psi_2)$ hold in $\mathbb{N}$.\footnote{{\em Solution to the exercises}.
\par  \qquad
For $\Delta_1$  and $\psi_1$: If   $T \vdash  \psi_1\!\leftrightarrow\! {\rm Pr}_T(\#\psi_1)$, then, by L\"ob's theorem  \cite[p.~187]{BoolosJeffrey1989}, we have
$T \vdash  \psi_1$, so that, because of arithmetization, ${\rm Pr}_T(\#\psi_1)$ holds in $\mathbb{N}$; hence  we have ${\rm Pr}_T(\#\psi_1)$ true and $\psi_1$ false, because of $(\ast\!\ast)$. Let us note that $T$ needs to be substantially richer than $\textit{Q}$ to have L\"ob's rule; it suffices for $T$ to contain ${\rm PA}$ or its fragment ${\rm I}\Sigma_1$.
\par \qquad
For $\Delta_2$  and $\psi_2$: if $T \vdash  \psi_2\!\leftrightarrow\! \neg{\rm Pr}_T(\#\psi_2)$ and $T$ is consistent, then, as demonstrated in G\"odel's proof of his first incompleteness theorem, $\psi_2$ is $T$-unprovable, hence,  by $(\ast\!\ast)$, $\neg{\rm Pr}_T(\#\psi_2)$ is true while $\psi_2$ false.}
\end{itemize}
The invalidation is now accomplished. We have introduced a procedure for producing instances of $\textbf{\textit{A}}$ and $\textbf{\textit{F}}$ showing the invalidity of the $(1)$-$(2)$-$(3)$ scheme. Specifically, for each unsound theory $T$ we have introduced two particular cases in point: first, a {\em false} sentence which says about itself that it is provable, and it is provable indeed; second, another {\em false} sentence which says about itself that it is unprovable and is unprovable indeed.

\subsection{Digression: another invalid argument scheme.}\label{subsec:dig}
Insofar as one is inclined to think that $(3)$ follows from $(1)\&(2)$ on the basis that a sentence is true when it has the property it ascribes to itself, one may also be inclined to think that $(2)$ follows from $(1)\&(3)$ on the basis that a true self-referential sentences does have the property it ascribes to itself---the $(1)$-$(2)$-$(3)$ scheme seems to be {\em as valid as} the following:
\begin{itemize}\itemindent=1em
\item[$(1')$] $\textit{\textbf{A}}$ says about itself that it is an $\textit{\textbf{F}}$
\item[$(2')$] $\textit{\textbf{A}}$ is true
\item[]   {\em therefore}
\item[$(3')$] $\textit{\textbf{A}}$ is indeed an $\textit{\textbf{F}}$.
\end{itemize}
Now the cute thing about the two-step invalidation presented in this section is that its first step suffices to show, in one breath, that {\em at least one of the two schemes is invalid}.\footnote{Note that by virtue of $(\ast)$, $\psi$ says about itself that it is a $\Delta$ while $(\ast\!\ast)$ shows that $\psi$ is true just in case $\psi$ is {\em not} a $\Delta$. Now if $\psi$ is indeed a $\Delta$, then $\psi$ is not true and $(1)$-$(2)$-$(3)$ is invalid; if, on the other hand, $\psi$ is {\em not} indeed a $\Delta$, then $\psi$ is true and $(1')$-$(2')$-$(3')$ is invalid. Therefore, at least one of $(1)$-$(2)$-$(3)$ or $(1')$-$(2')$-$(3')$ is invalid. Above we saw that the first scheme is invalid; in a moment we will see that so is the second one.}
  Assuming, intuitively, that deriving $(3)$ from $(2)$ looks as good as doing the reverse, this would show that there must be something wrong with the original $(1)$-$(2)$-$(3)$ argument scheme even if we do not go through Step 2 above.

      Let us invalidate the $(1')$-$(2')$-$(3')$ scheme right away.
Suppose $T$ is a consistent and unsound theory that proves a false sentence $\varphi$. Take $\psi$ to be the true sentence $\neg\varphi$, and take $\Delta(x)$ to be $\neg{\rm Pr}_T(\#\neg x)$. Now $\Delta(x)$ is true iff the sentence with the G\"odel number $x$ is consistent with $T$. Since $T\vdash\varphi$, we have $T\vdash\neg\psi$, and so $T\vdash{\rm Pr}_T(\#\neg\psi)$. Therefore $T \vdash\neg\psi
\!\leftrightarrow\!{\tt Pr}_T(\#\neg\psi)$, which implies that $T \vdash\psi
\!\leftrightarrow\!\Delta(\#\psi)$. Thus $\psi$ says about itself that it is a $\Delta$ (in the eye of $T$), and $\psi$ is true; but $\Delta(\#\psi)$ is not true since by the $T$-provability of $\neg\psi$, ${\rm Pr}_T(\#\neg\psi)$ is true.

\section{G\"odel's 1931 paper.}\label{sec:classic}
In this exegetical section, we examine what we think of as an instance of the $(1)$-$(2)$-$(3)$ argument scheme occurring in the introductory and informal section of  \cite{Godel1931}, and make a number of comments---in particular, we explore a number of extenuating circumstances in favour of G\"odel's informal argument. Once again, we wish to make it explicit that the invalidity of the $(1)$-$(2)$-$(3)$ scheme does {\em not} affect the correctness of G\"odel's mathematical results in the technical part of the 1931 paper---though it may, we hope, shed some light on some philosophical issues, to be discussed below in Section~\ref{sec:gsent}.

G\"odel begins his 1931 paper with an introductory section, wherein, in a very lucid and reader-friendly way, he informally introduces the ideas behind the first incompleteness theorem. By the end of the antepenultimate paragraph of that section, he has argued for the
independence of what is now called `the G\"odel sentence' of ${\rm PM}$. His informal proof is quite short, for two reasons: first, he does not go into the details of arithmetization and  simply assumes that arithmetization can be done. Secondly, he assumes that ${\rm PM}$ is sound. The use of the second assumption makes the result actually weaker than what is stated in the technical part of the paper, where he proves a stronger version by means of substituting the soundness assumption by $\omega$-consistency (see \ref{subsec:omega} below).

In his next paragraph, G\"odel comments on the analogy of the argument with some antimonies. He could have stopped there---we regret that he did not, for we think the next paragraph contains a fallacy. In the final paragraph of the section, he writes \cite[p.~151, italics in the original]{Godel1931}:
\begin{itemize}
\vspace{-1ex}
\item[]\begin{quote}
\begin{quote}
{\small
From the remark that $[R(q);q]$ says about itself that it is not provable, it follows  at     once  that $[R(q);q]$ is true, for $[R(q);q]$ {\em is} indeed unprovable (being undecidable).}	
\end{quote}
\end{quote}
\vspace{-1ex}
\end{itemize}
The way that the proposition is named by G\"odel (i.e., `$[R(q);q]$') reflects its manner of construction, which we will discuss below; however, for the ease of exposition, let us call the sentence `$G$'. The argument then seems to be this: {\em $G$ says about itself that it is not provable, and $G$ is indeed unprovable; therefore $G$ is true}, which is an instance of the $(1)$-$(2)$-$(3)$ scheme introduced at the beginning of our paper. We have argued in Section~\ref{sec:details} above that, {\em as it stands}, the argument is invalid. In what follows will try to be maximally charitable to G\"odel. Besides, we are aware that G\"odel himself has said, both in the 1931 paper and elsewhere, that in his introduction he does not claim to be perfectly precise.\footnote{ \cite[p.~147]{Godel1931};  cf. \cite[pp.~88$f$]{Dawson1984} on the Zermelo-G\"odel correspondence. For an old critical comment on G\"odel's introduction see \cite[p.~58]{Helmer1937}, where Helmer writes ``G\"odel made one `mistake', namely that of writing an introduction to his paper ...''.  It seems to us that the point we are making  here has not been noted by these logicians.}

\subsection{The construction of $G$, whereby G\"odel guarantees the truth of the relevant instance of premise (1).}\label{subsec:g}
What we nicknamed `$G$'  is formally constructed in \cite[pp.~173$f$]{Godel1931}, where its official description there is `$17\, {\rm Gen}\, r$'. From the construction of $G$ it is quite clear that if $T$ is the background theory (be it ${\rm PM}$, ${\rm ZF}$, or what have you) with provability predicate ${\rm Pr}_T$, then
\begin{itemize}\itemindent=1em
\item[$(4)$] $T\vdash G\!\leftrightarrow\!\neg{\rm Pr}_T(\#G)$
\end{itemize}
which, for a modern reader, is a straightforward application of the diagonal lemma to the formula $\neg{\rm Pr}_T(x)$.\footnote{Though G\"odel is evidently exploiting a diagonal technique in the construction of $G$, it would be somehow anachronistic to say that he is using the {\em diagonal lemma}: it was Rudolf Carnap, in his 1934 {\em Logische Syntax der Sparche}, who first isolated the lemma as such---see \cite[p.~363$n$23]{Godel1934}.}  A quick look at a standard proof of the lemma as presented in \cite[p.~173]{BoolosJeffrey1989} shows that we also have
\begin{itemize}\itemindent=1em
\item[$(5)$] $\mathbb{N}\vDash\neg{\rm Pr}_T(\#G)$;
\end{itemize}
that is to say, $G$ is unprovable indeed. And what does G\"odel mean by {\em indeed}?

\subsection{G\"odel's `indeed'.}\label{subsec:indeed}
When G\"{o}del says that something is true, he means that it is true in the standard model $\mathbb{N}$. This is multiply evidenced by his explicit note that the formul{\ae} of the language in question should be understood in a way that in those formul{\ae},
\begin{itemize}
\vspace{-1ex}
\item[]\begin{quote}
\begin{quote}
{\small
no other notions occur but $+$ (addition) and $\boldsymbol\cdot$ (multiplication),  both for {\em natural}   {\em numbers},  and in which the quantifiers $(x)$, too, apply to {\em natural numbers} only.}  \;  \cite[p.~145$n$4, emphasis ours]{Godel1931}.	
\end{quote}
\end{quote}
\vspace{-1ex}
\end{itemize}
Thus by saying that $G$ is {\em indeed} unprovable, G\"odel surely means (5)---namely, that $\neg{\rm Pr}_T(\#G)$ holds in $\mathbb{N}$. Once again, the construction of $G$ guarantees that (5) is the case.

All these, we submit, show that the informal argument of the passage that quoted at the beginning of this section is
\begin{itemize}\itemindent=1em
\item[(4)] $T\vdash G\!\leftrightarrow\!\neg{\rm Pr}_T(\#G)$
\item[(5)] $\mathbb{N}\vDash\neg{\rm Pr}_T(\#G)$
\item[]   {\em therefore}
\item[(6)] $\mathbb{N}\vDash G$,
\end{itemize}
whose logical form we recognize as $(1)$-$(2)$-$(3)$. G\"odel says that the conclusion follows `at once' (`sofort' in the original German) which, to our ears, suggests that no extra assumptions are needed. But perhaps he did have additional assumptions in mind? Let us see.

\subsection{What G\"odel officially assumed: not soundness, but $\omega$-consistency.}\label{subsec:omega}
G\"odel's informal argument is fallacious {\em only if} the background theory $T$ is not sound. For if all the theorems of $T$ are true in $\mathbb{N}$, so is the biconditional of (4), which, together with (5), makes $G$ true in $\mathbb{N}$, as asserted by (6). The assumption of soundness may seem to be what G\"odel had in mind, for he says, in the very first paragraph of his paper, that his result holds in particular for every extension of ${\rm PM}$,
\begin{itemize}
\vspace{-1ex}
\item[]\begin{quote}
\begin{quote}
{\small
provided no false proposition of the kind  specified
…   become   provable   owing to the added axioms.}	\;
\cite[pp.~145$f$]{Godel1931}
\end{quote}
\end{quote}
\vspace{-1ex}
\end{itemize}
In itself, this suggests that G\"odel is talking about sound theories. Also, in the next-to-last paragraph of his Section 1, he says \cite[p.~151]{Godel1931} that his method of proof is applicable to any formal system which is, first, of sufficient expressive power and, secondly, is such that ``every provable formula is true in the interpretation considered''. This, too, suggests that he is talking about sound theories only.

However, soundness is definitely {\em not} what G\"odel had in mind as a required property of the theories under consideration in his paper, for he immediately adds,
\begin{itemize}
\vspace{-1ex}
\item[]\begin{quote}
\begin{quote}
{\small
The purpose of carrying out the above proof with full precision in what follows is,   among other things, to replace {\em the second of the assumptions just mentioned} by a   purely formal and much weaker one.}	 \;   (Ibid., emphasis added.)
\end{quote}
\end{quote}
\vspace{-1ex}
\end{itemize}
That much weaker and purely formal assumption is $\omega$-consistency, so baptized by G\"odel himself, which is introduced just before the statement of the first incompleteness theorem and is mentioned in it \cite[p.~173]{Godel1931}.\footnote{By definition, a theory $T$ (in the language of arithmetic) is {\em $\omega$-consistent} iff there is no formula $\xi(x)$ such that $T$ proves the sentence $\exists x\neg\xi(x)$ {\em and} proves all of the following sentences: $\xi(\textbf{1}), \xi(\textbf{2}), …, \xi(\textbf{n}), \ldots$ (for every natural number $n$), where boldface roman characters denote the standard terms for the corresponding numerals. (For the formal definition see \cite[p.~173]{Godel1931} or \cite[pp.~851$f$]{Smorynski1977}.)
\par\quad
Simple consistency of ${\rm PM}$ suffices for showing that $G$ is ${\rm PM}$-unprovable. It seems that the sole purpose of introducing $\omega$-consistency was that G\"odel was unable to show the irrefutability of $G$ merely by assuming consistency. The assumption of $\omega$-consistency was weakened to simple consistency in  \cite{Rosser1936}, where Rosser showed the undecidability of {\em another} sentence.
}
So, although G\"odel talks about truth and soundness in his informal and introductory Section 1, he avoids talking about it in the technical part of the paper---in fact, he explicitly tells us that one purpose of going formal and precise was just replacing his talk about {\em truth} of the theorems of certain theories (say ${\rm PM}$) with the talk about a purely syntactical property of them.

Officially speaking, then, G\"odel's first incompleteness theorem assumes that the theories in question are $\omega$-consistent, a condition weaker than soundness.
An invaluable source here is \cite{Isaacson2011}, wherein we find a theorem---Isaacson's Proposition 19, attributed to Kreisel---telling us about a false sentence $K$ such that ${\rm PA}+K$ is $\omega$-consistent.\footnote{This is an almost immediate consequence of formalizing the notion of $\omega$-consistency. See also \cite[p.~36]{Lindstrom1997}.}
This allows us to invalidate the $(1)$-$(2)$-$(3)$ scheme via presenting an $\omega$-consistent theory: Simply take $\Delta(x)$ to be the formula `$x=\#K$', and take $\psi$ and $T$ to be the sentence $K$ and the theory ${\rm PA}+K$, respectively. Moral: the assumption of $\omega$-consistency of the background theory is not strong enough to save the $(1)$-$(2)$-$(3)$ scheme from invalidity.

Having said that, we acknowledge that we {\em might}  have invalidated G\"odel's argument only via over-generalizing it.
Several additional premises (including some restrictions on the complexity of the relevant predicates and formulæ) are examined in  \cite{LajevardiSalehi2019}
which render the scheme valid.
 More specifically, if $\textit{\textbf{A}}$ and $\textit{\textbf{F}}$ are of $\Pi_1$ complexity, then the extra assumption of $\omega$-consistency guarantees the validity of the scheme---see Proposition~\ref{prop} in the Appendix (\S\ref{app:c}). Both $G$ and $\neg{\rm Pr}_T(x)$  are $\Pi_1$ for all the theories we have mentioned in this paper.

Did G\"odel actually commit the fallacy? Perhaps not: perhaps he meant to show the truth of $G$ only for sound theories and did not intend to assert the truth of $G$ in general for all the theories which are subject to the official version of his first incompleteness theorem. Alternatively, he might have been careless to mention that $G$ and $\neg{\rm Pr}_T(x)$  are $\Pi_1$ (see footnote~\ref{fn}  below for G\"odel's terminology).
Though such considerations clear G\"odel himself of the charge of talking fallaciously, they will not acquit a number of philosophers who talked about ``seeing'' the truth of $G$. We return to this in \ref{subsec:truthg} below.
\subsection{The (ir)relevance of the notion of self-reference.}\label{subsec:irrev}
There is an ongoing discussion on what it means for a sentence  to say something about itself---see   \cite{HalbachVisser2014}. By no means do we want to neglect this scholarly issue; however, if the task is to evaluate {\em G\"odel's} argument as presented in his introductory section, we take it to be quite obvious that he had thought of the very sentence $G$ as a sentence saying about itself that it is unprovable---this sentence  is, in fact, a paradigm of self-referentiality. So, whatever the correct analysis of the concept of self-attribution might turn out to be, {\em what G\"odel, qua the author of} \cite{Godel1931}, {\em had in mind} must be something retrievable from what he has done in that paper. Given that his apparatus for constructing $G$ is a version of the diagonal lemma, we think there is no choice but to think that, for G\"odel, $\textit{\textbf{A}}$'s saying about itself that it has property $\textit{\textbf{F}}$ just means that the biconditional $\textit{\textbf{A}}\!\leftrightarrow\!
\textit{\textbf{F}}(\#\textit{\textbf{A}})$ is provable in the system. We therefore find the $(4)$-$(5)$-$(6)$ argument the only thing that G\"odel could have had in mind in this connection, the logical form whereof we recognize as the $(1)$-$(2)$-$(3)$ scheme.

We do agree with the statement that
is quoted in \cite[p.~672]{HalbachVisser2014} sympathetically
from logician Craig Smory\'nski, that the  ``notion of a sentence's expressing something about itself has not proven fruitful'', and we think Kripke is right when, concerning the argument for the independence of $G$, says
\begin{itemize}
\vspace{-1ex}
\item[]\begin{quote}
\begin{quote}
{\small
The argument can be carried through without noticing explicitly  that    $G$ says something about itself (i.e. that it itself is not provable).} \; \cite[p.~239]{Kripke2014}.
\end{quote}
\end{quote}
\vspace{-1ex}
\end{itemize}
Here we wish to make two minor comments. First, these insightful remarks are made in the context of {\em formal} mathematical logic: Smory\'nski and Kripke justly remark that, insofar as G\"odel's technical results are concerned, G\"odel did not have to talk about self-reference. It should be clear, however, that an evaluation of the {\em informal} argument presented in the introductory part of G\"odel's paper requires a working notion of self-reference. Once again, we cannot see what G\"odel could have had in mind other than an informal variant of what we have presented as $(4)$-$(5)$-$(6)$.

Secondly, if, for whatever reason, G\"{o}del felt an urge to talk about self-reference (perhaps in order to make his introduction more attractive to the general reader), with the wisdom of hindsight we know that instead of self-reference in the standard textbook manner, which for every formula $\Delta(x)$ provides us with a sentence $\psi$ such that the biconditional $\psi\!\leftrightarrow\!\Delta(\#\psi)$ is a theorem of the background theory (and let us call this kind of self-reference {\em indirect}), he could have exploited a {\em direct} self-reference in such a way that, for every formula $\Delta(x)$ we get a sentence $\psi$ such that $\psi$ just {\em is} the formula $\Delta(\#\psi)$.\footnote{So far as we know, the idea of such a direct self-reference goes back to \cite[p.~693]{Kripke1975}. See also \cite[pp.~159$f$]{Visser2004}.}  Now, instead of extra assumptions
concerning the soundness or complexity of $\Delta$ and $\varphi$, here is another way of providing a valid version of the $(1)$-$(2)$-$(3)$ scheme: instead of a traditional indirect self-reference, one may go the direct way and replace $(1)$ with
\begin{itemize}\itemindent=2.5em
\item[$(1'')$] $\textit{\textbf{A}}$ is the sentence $\textit{\textbf{F}}(\#\textit{\textbf{A}})$,
\end{itemize}
which makes the scheme trivially valid.

\section{G\"odelian sentences and truth.}\label{sec:gsent}
\subsection{Why did G\"odel talk about truth?}\label{subsec:why}
The official statement of G\"odel's first incompleteness theorem \cite[p.~173]{Godel1931} does not mention truth: it is simply a theorem to the effect that every theory of a certain totally syntactically specified kind is {\em incomplete}, in the sense that there are sentences in the language of the theory which are axiomatically undecidable on the basis of that theory: they are neither provable nor refutable. If you are a kind of realist with respect to mathematics, as surely G\"odel was from the 1940s onwards (if not earlier), you are inclined to say that each undecidable sentence is ``really'' either true or false---hence the {\em popular} version of the first incompleteness theorem: for every theory of a certain specified kind, there are {\em true} sentences which are unprovable on the basis of that theory. Yet this is something which is not formally presented in G\"odel's Theorem.

Regardless of one's verdict on G\"odel in connection with the $(1$)-$(2)$-$(3)$ argument in his
 introductory section, one may observe that that section has two drawbacks in comparison with the technical parts of the paper:
  \begin{quote}
  \begin{itemize}\itemindent=1ex
 \item[I.] By posing a stronger requirement on the theories in question (soundness, instead of $\omega$-consistency), it weakens the {\em content} of the first incompleteness theorem, making it applicable to a narrower class of theories.
 \item[II.] By introducing the notion of truth (via soundness), it makes the {\em proof} of the theorem acceptable to a smaller class of mathematicians.
 \end{itemize}
 \end{quote}
We have already (in \ref{subsec:omega}) talked about $\omega$-consistency, hence substantiated (I). As for (II), note that G\"odel's formal and official proof, which is a {\em tour de force}  of giving a purely constructive and syntactical proof of the first incompleteness theorem, is acceptable to realists and anti-realists alike;\footnote{G\"odel himself comments \cite[p.~177$n$45a]{Godel1931} that his technical proof is intuitionistically acceptable.}  but, by talking about truth, G\"odel actually makes parts of the argument of his introduction susceptible to becoming unconvincing for those readers who are of intuitionist persuasion. In fairness to G\"odel, however, we admit that (I) and (II) are reasonable prices to pay---at least in an introductory section---to attain a higher level of perspicuity of exposition, attained by G\"odel's introduction.

However, a nagging question remains: why did G\"{o}del pause to argue that $G$ is true? Even if this particular instance of the $(1)$-$(2)$-$(3)$ scheme is saved from invalidity by the presence of the assumption of soundness in the context of G\"{o}del's introduction, it is not quite clear how he would restore the argument for the truth of $G$ in the context of unsound theories without appealing to the complexity issues (referred to in \ref{subsec:omega} above), which are absent from his 1931 paper.\footnote{\label{fn}We are far from saying that G\"{o}del was unaware of well-known and elementary facts about the complexity of formul{\ae}. He surely was---thus, for example, in a short note of the same period he talks about propositions ``of  the type of Goldbach'' \cite[p.~203]{Godel1931a}, by which here he means what is now called $\Pi_1$-formulas; see also \cite[p.~154]{Smith2013}.}  And it seems to us that, even for the sake of elementary lucid exposition  of topics he later deals
with in full precision, the question about the truth of $G$ is simply irrelevant. Why, then, did he care to talk about the {\em truth} of $G$?

By way of speculation, we submit that the reason may have something to do with what might be thought of as G\"{o}del's target or antagonist, the school of David Hilbert. Talking about the relationship between G\"{o}del's incompleteness theorems and Hilbert's programme goes well beyond the scope of this paper; we just want to make an amateurish conjecture that, after what he perhaps thought of as a fatal blow at Hilbert's programme via the mathematical content of his incompleteness theorems, G\"{o}del wanted to add a {\em coup de gr\^{a}ce} by arguing that while for every system of the specified kind the   sentence $G$ is axiomatically undecidable, it {\em is} decidable via some acceptable reasoning accessible to human beings;  hence mathematics cannot be captured by any axiomatics. We do regret that he has done so.

      Now let us talk more rigorously.

\subsection{On the truth of G\"odelian  sentences.}\label{subsec:truthg}
Concerning the age-old question of the truth of G\"odelian  sentences we think some insight can be gained from what we have investigated so far. Of the first-rate and/or well-known works on this question the following references readily come to mind: \cite{Dummett1963},
\cite{Smorynski1977}, \cite{Smorynski1985},
\cite{Boolos1990},
\cite{Milne2007},
\cite{Raatikainen2005},
\cite{Sereny2011},
\cite{Shapiro1998},
\cite{Tennant2002}, and the collection of papers 
\cite{HorstenWelch2016}. We wish to make two specific points before presenting our analysis of what is going on in a good number of such discussions.

\subsubsection{A consistent theory may have more than one G\"odelian  sentences---even up to truth-value.}\label{subsubsec:more}
Following \cite{LajevardiSalehi2021}, we choose to talk about {\em G\"odelian} sentences of a theory $T$ not {\em the G\"odel sentence} of $T$, for an unsound $T$ may have a true I-am-unprovable sentence as well as a false one (though they are equivalent in the eye of $T$), in which case it is bizarre to talk about {\em the} I-am-unprovable sentence. This, however, may be more a matter of propriety of speech than anything of great logico-mathematical significance.\footnote{The reader is invited to consult \cite{LajevardiSalehi2023} and the references therein for some discussions on the notion of {\em the} G\"odel sentence (of a theory, possibly  relative to a certain sub-theory).}
Here is the formal definition.
\begin{definition}
$G$ is a {\em G\"odelian sentence} of $T$ iff $T\vdash G\!\leftrightarrow\!\neg{\rm Pr}_T(\#G)$.
\end{definition}
\subsubsection{G\"odelian sentences of $T$ are collectively  true if and only if $T$ is sound.}\label{subsubsec:truesound}
For this fact we present a rigorous proof. In the following lemma and theorem we assume that $T$ satisfies L\"ob's derivability conditions.

\begin{lemma}\label{lem}
\textit{If $\tau$ is a $T$-provable sentence and $\gamma$ is a G\"odelian sentence of $T$, then $\tau\!\wedge\!\gamma$ is a G\"odelian sentence of $T$ as well.}
\end{lemma}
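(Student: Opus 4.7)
The plan is to unwind the definition of ``G\"odelian sentence'' for $\tau\!\wedge\!\gamma$ and to reduce the biconditional we must prove to the one we are given for $\gamma$, using the $T$-provability of $\tau$ together with L\"ob's derivability conditions (specifically D1 and D2).

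First I would observe that, since $T\vdash\tau$, purely propositional reasoning inside $T$ gives $T\vdash(\tau\!\wedge\!\gamma)\!\leftrightarrow\!\gamma$. This is the key ``outside-the-box'' equivalence, and I will now need its internalized counterpart in order to relate the provability predicates applied to $\#(\tau\!\wedge\!\gamma)$ and $\#\gamma$. By derivability condition D1 (necessitation), from $T\vdash(\tau\!\wedge\!\gamma)\!\leftrightarrow\!\gamma$ I get $T\vdash{\rm Pr}_T(\#[(\tau\!\wedge\!\gamma)\!\leftrightarrow\!\gamma])$, and then two applications of D2 (distribution over implication) yield
$$T\vdash{\rm Pr}_T(\#(\tau\!\wedge\!\gamma))\!\leftrightarrow\!{\rm Pr}_T(\#\gamma),$$
and hence $T\vdash\neg{\rm Pr}_T(\#(\tau\!\wedge\!\gamma))\!\leftrightarrow\!\neg{\rm Pr}_T(\#\gamma)$.

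At this point the result is essentially a chain of biconditionals. Using, in order, the propositional equivalence $T\vdash(\tau\!\wedge\!\gamma)\!\leftrightarrow\!\gamma$, the hypothesis that $\gamma$ is a G\"odelian sentence of $T$ (i.e.\ $T\vdash\gamma\!\leftrightarrow\!\neg{\rm Pr}_T(\#\gamma)$), and the internalized equivalence of provability just obtained, I would conclude
$$T\vdash(\tau\!\wedge\!\gamma)\!\leftrightarrow\!\gamma\!\leftrightarrow\!\neg{\rm Pr}_T(\#\gamma)\!\leftrightarrow\!\neg{\rm Pr}_T(\#(\tau\!\wedge\!\gamma)),$$
which is precisely the statement that $\tau\!\wedge\!\gamma$ is a G\"odelian sentence of $T$.

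The main obstacle is really only a matter of care rather than depth: one must be sure that L\"ob's derivability conditions (D1 and D2) are available in $T$ to transport the outer equivalence $(\tau\!\wedge\!\gamma)\!\leftrightarrow\!\gamma$ inside the provability predicate, which is precisely the hypothesis flagged in the text immediately before the lemma. No use of the full L\"ob theorem (or of D3) is needed; the argument is purely formal and does not rely on the soundness of $T$, which is important given the paper's emphasis on unsound but consistent theories.
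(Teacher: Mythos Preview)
Your proof is correct and follows essentially the same route as the paper's own proof: both establish $T\vdash(\tau\!\wedge\!\gamma)\!\leftrightarrow\!\gamma$ from the $T$-provability of $\tau$, internalize this via the derivability conditions to get $T\vdash{\rm Pr}_T(\#[\tau\!\wedge\!\gamma])\!\leftrightarrow\!{\rm Pr}_T(\#\gamma)$, and then chain the biconditionals. Your version is slightly more explicit about which derivability conditions (D1 and D2) are actually used, which is a welcome clarification.
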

\begin{proof}
By the $T$-provability of $\tau$ we have
$T\vdash (\tau\!\wedge\!\gamma)
\!\leftrightarrow\!\gamma$.
Therefore, by derivability conditions, we have
$T\vdash{\rm Pr}_T(\#[\tau\!\wedge\!\gamma])
\!\leftrightarrow\!
{\rm Pr}_T(\#\gamma)$
hence
$T\vdash
\neg {\rm Pr}_T(\#\gamma)
\!\leftrightarrow\!
\neg {\rm Pr}_T(\#[\tau\!\wedge\!\gamma])$.
Since $\gamma$ is a G\"odelian sentence of $T$, we have
$$T\vdash
(\tau\!\wedge\!\gamma)\!\leftrightarrow\!
\gamma\!\leftrightarrow\!
\neg {\rm Pr}_T(\#\gamma)\!\leftrightarrow\!
\neg {\rm  Pr}_T(\#[\tau\!\wedge\!\gamma]),$$
which shows that $\tau\!\wedge\!\gamma$ is a G\"odelian sentence of $T$.
\end{proof}

Now we have

\begin{theorem}\label{thm}
\textit{$T$ is sound if and only if all G\"odelian sentences of $T$ are true.\footnote{See Proposition~\ref{thm-app} in the Appendix (\S\ref{app:a}) for a more general result.}
}\end{theorem}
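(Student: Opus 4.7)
The plan is to prove the two directions separately, with the non-trivial content concentrated in the ($\Leftarrow$) direction, where Lemma~\ref{lem} will do essentially all the work.

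For the forward direction, I would assume that $T$ is sound and let $\gamma$ be an arbitrary G\"odelian sentence of $T$. Soundness entails consistency of $T$, and the standard argument behind G\"odel's first incompleteness theorem (relying only on the representability of provability and the definition of a G\"odelian sentence) then yields $T \nvdash \gamma$; consequently $\neg {\rm Pr}_T(\#\gamma)$ is true in $\mathbb{N}$, since ${\rm Pr}_T$ correctly represents $T$-provability on standard numerals. Because $T \vdash \gamma \leftrightarrow \neg {\rm Pr}_T(\#\gamma)$ and $T$ is sound, this biconditional also holds in $\mathbb{N}$, so $\mathbb{N}\vDash \gamma$. Thus every G\"odelian sentence of $T$ is true.

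For the converse, I would argue by contraposition. Suppose $T$ is unsound, and fix a sentence $\tau$ with $T\vdash\tau$ and $\mathbb{N}\nvDash\tau$. By the diagonal lemma applied to $\neg{\rm Pr}_T(x)$, there exists a G\"odelian sentence $\gamma$ of $T$. Now invoke Lemma~\ref{lem}: since $\tau$ is $T$-provable and $\gamma$ is G\"odelian, the conjunction $\tau\wedge\gamma$ is itself a G\"odelian sentence of $T$. But $\tau$ is false in $\mathbb{N}$, so $\tau\wedge\gamma$ is false in $\mathbb{N}$. Hence not every G\"odelian sentence of $T$ is true, completing the contrapositive.

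I do not anticipate a genuine obstacle: the hypothesis that $T$ satisfies L\"ob's derivability conditions (inherited from the lemma's setting) suffices both to run the standard unprovability argument in the ($\Rightarrow$) direction and to apply Lemma~\ref{lem} in the ($\Leftarrow$) direction. The only delicate point worth flagging in the write-up is to be explicit that soundness is used twice in the forward direction---once to get consistency (and thereby unprovability of $\gamma$), and once to transfer the biconditional from $T$-provability to truth in $\mathbb{N}$.
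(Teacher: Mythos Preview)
Your proposal is correct and follows essentially the same route as the paper: the forward direction uses soundness to transfer the biconditional to $\mathbb{N}$ and G\"odel's unprovability argument to get the truth of $\neg{\rm Pr}_T(\#\gamma)$, while the converse is by contraposition, conjoining a false $T$-theorem $\tau$ with an arbitrary G\"odelian sentence $\gamma$ and invoking Lemma~\ref{lem}. Your write-up is slightly more explicit than the paper's (e.g., flagging the two uses of soundness), but there is no substantive difference.
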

\begin{proof}
If $\gamma$ is a G\"odelian sentence of a sound theory $T$, then  $T\vdash\gamma\!\leftrightarrow\!\neg {\rm Pr}_T(\#\gamma)$, which implies that the biconditional $\gamma\!\leftrightarrow\!\neg {\rm  Pr}_T(\#\gamma)$ is a true sentence. But by the $T$-unprovability of $\gamma$ (G\"odel's proof), the sentence $\neg {\rm Pr}_T(\#\gamma)$ is true; therefore $\gamma$ is true.

On the other hand, if $T$ is not sound then there is a false $T$-provable sentence $\tau$.  Let $\gamma$ be any G\"odelian sentence of $T$ (whose existence is demonstrated by the diagonal lemma). Then $\tau\wedge\gamma$ is a false sentence which is, by  Lemma~\ref{lem}, a G\"odelian sentence of $T$. Therefore $T$ has a false G\"odelian sentence.
\end{proof}

      Now, to the main business of this section.

\subsubsection{At the end of the day, should we say that G\"odelian sentences are true?}\label{subsubsec:arethey}
It seems to us that a good number of the works on the truth of G\"odelian sentences are shaky or even outright fallacious. This has been observed or claimed by some philosophers such as Shapiro~\cite{Shapiro1998}; yet we think we can offer a brief and more systematic analysis.
\begin{itemize}\itemindent=0.75em
  \item[\textbf{\textit{i}}.] Occasionally, one encounters an argument like this: {\em Each G\"odelian sentence is true because it says that it is unprovable and it is indeed unprovable (if the theory is consistent)}. In itself, this is a fallacy (as shown in our Section~\ref{sec:details} above). To say the least, the italicized argument needs more assumptions for its validity. It's a pity to see this fallacious-or-gappy argument in some early editions of first-rate textbooks such as \cite[p.~186]{BoolosJeffrey1989} and \cite[p.~159]{Mendelson1979}.\footnote{Happily, the mistakes are not there anymore. Concerning \cite{BoolosJeffrey1989}, it is noteworthy that from the fourth edition onwards (prepared by John P. Burgess), the fallacious passage is simply omitted. Mendelson, on the other hand, now shows the truth of G\"odelian sentences only for sound theories---thus on page 209 of the sixth edition, published in 2015, we have the assumption that the theory ``is a true theory''.}
  \item[\textbf{\textit{ii}}.] Let us not worry about the {\em reasonings} of those, like \cite{Dummett1963}, who argue that G\"odelian  sentences of consistent theories are true; let us ask if their {\em conclusion} is correct---that is to say: Does the consistency of a theory $T$ guarantee the truth of all its G\"odelian sentences? (We will assume that the theories in question are recursively enumerable extensions of Robinson's $\textit{Q}$.) Theorem 3.3 in  \cite{LajevardiSalehi2021} answers this affirmatively, with a proviso. Recall that $P$ is a G\"odelian sentence of $T$ iff the biconditional $P\!\leftrightarrow\!\neg {\rm Pr}_T(P)$ is provable in $T$. Now if, {\em moreover}, the biconditional is also true in the standard model $\mathbb{N}$, then $P$ is true if and only if $T$ is consistent. The condition on the truth of the biconditional  $P\!\leftrightarrow\!\neg {\rm  Pr}_T(\#P)$ is easily satisfied {\em if} $P$ is constructed via the celebrated diagonal lemma.
  \item[\textbf{\textit{iii}}.] \textsc{But}, not all G\"odelian  sentences of a theory need to satisfy this extra condition. The above argument does not show that every G\"odelian   sentence of a consistent theory is true; rather, it shows it only for those $P$s asserting their own unprovability such that the biconditional $P\!\leftrightarrow\!\neg {\rm Pr}_T(\#P)$ is also true in  $\mathbb{N}$.
\par
Enter our Theorem. Even if the reader does not share our worry concerning the impropriety of the term `the G\"odel sentence', he or she may concede this much: the argument mentioned in \ref{subsubsec:arethey}.ii   does not show that every sentence which is $T$-equivalent to its own $T$-unprovability is true if $T$ is consistent. If one's question is whether every sentence asserting its own unprovability is true, one should be noted that, because of  Theorem~\ref{thm} proved above, the truth of all such sentences is equivalent to the soundness of $T$, and---you may recall---soundness is {\em much stronger} an assumption than $\omega$-consistency, which is in turn stronger than simple consistency.\footnote{ It may even be demonstrated that every false sentence is a G\"odelian  sentence of some unsound theory (see the Appendix,\S\ref{app:b}).}
\end{itemize}

\section{Conclusion.}\label{sec:conc}
As a matter of logical fact, the $(1)$-$(2)$-$(3)$ argument scheme displayed at the beginning of this paper is invalid even if we assume that the background system is $\omega$-consistent (and {\em a fortiori}, even if the system is consistent). G\"odel's particular instance of it---the (4)-(5)-(6) argument---is  valid (even for unsound theories), but only because the involved sentence and predicate are of certain complexity---that is to say, because they are both $\Pi_1$, or ``of the type of Goldbach'' in G\"odel's later terminology. Interestingly, as we will see in the Appendix (\S\ref{app:c}), the way G\"odel constructs his self-referential sentence makes the premise (5) a consequence of his (4), and therefore redundant.

Apart from logic chopping, our observations are of some consequence concerning the debate on the truth of {\em G\"odelian sentences}---or, to avoid verbal disputes, the truth of {\em each and every sentence which asserts its own unprovability}. Our Theorem~\ref{thm} proves that nothing less than the full power of soundness of the system guarantees the truth of all such sentences.

\appendix\label{append}
\section*{Appendices.}
We present some refinements and generalizations of some technical results presented in the paper. For the sake of brevity, most of the proofs are omitted.

\section{More self-referential sentences.}\label{app:a}
G\"odel's proof made the self-referential sentence `I am unprovable' famous; his proof of the first incompleteness theorem shows that, for certain theories, every such sentence is in fact unprovable. Next, in the early 1950s, Leon Henkin asked: What if a sentence says about itself that it is provable? L\"ob's answer \cite{Lob1955}  is that such sentences are in fact provable. And one moral of our paper is that, in the absence of further information, from the very fact of the (un)provability of the I-am-(un)provable sentence, its truth does not follow.

Call a predicate $\Delta$ {\em self-fulfilling} with respect to a given theory $T$ iff the following is the case:
every sentence $\psi$ which, with respect to $T$, says about itself that it is a $\Delta$, is indeed a $\Delta$.     Thus (see our two examples in Section~\ref{sec:details} above) {\em is provable} and {\em is unprovable} are both self-fulfilling.

Here is a more interesting self-fulfilling predicate: {\em is decidable}. Suppose that, in the eye of $T$, a sentence $\psi$ says about itself that it is axiomatically decidable, i.e.,   $T\vdash\psi\!\leftrightarrow\!{\rm Pr}_T(\#\psi)\!\vee\!{\rm  Pr}_T(\#[\neg\psi])$. Then we   have $T\vdash\neg\psi\!\leftrightarrow\!\neg{\rm  Pr}_T(\#\psi)\!\wedge\!\neg{\rm  Pr}_T(\#[\neg\psi])$, so that $T+\neg\psi$ proves the consistency of  $T+\neg\psi$. Hence, by G\"odel's second incompleteness theorem, $T+\neg\psi$ is {\em in}consistent and we have $T\vdash\psi$. By arithmetization, ${\rm Pr}_T(\#\psi)$ is true in $\mathbb{N}$, and so is ${\rm Pr}_T(\#\psi)\!\vee\!{\rm Pr}_T(\#[\neg\psi])$. Therefore, every sentence which, in the eye of $T$, says about itself that it is $T$-decidable is in fact $T$-decidable---more specifically, any such sentence is $T$-provable.

Let us say that $\Delta$ is {\em self-falsifying} with respect to a given theory $T$ iff every sentence which says in $T$ about itself that it is a $\Delta$, is indeed {\em not}-$\Delta$. Examples include {\em is refutable} and {\em is consistent with} predicates.

Our argument in \S\ref{sec:details} actually proves  the following more general proposition.

\begin{proposition}\label{thm-app}
\textit{Let $T$ be a consistent theory (containing Robinson's arithmetic $\textit{Q}$). Let $\Delta$ be a self-fulfilling predicate and let $\Theta$ be a self-falsifying predicate with respect to $T$. Then the following are equivalent:
\begin{itemize}\itemindent=1.25em
\item[\textit{(i)}]  	The soundness of $T$.
\item[\textit{(ii)}]  	The truth of all the sentences which assert inside $T$ that they are $\Delta$.
\item[\textit{(iii)}]  	The falsehood of all the sentences asserting inside $T$ that they are $\Theta$.\footnote{The experts are invited to compare this to Theorem 24.7 of \cite[p.~182]{Smith2013}.}
\end{itemize}
}\end{proposition}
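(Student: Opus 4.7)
The plan is to prove the three-way equivalence by establishing the cycle (i)$\Rightarrow$(ii), (i)$\Rightarrow$(iii), (ii)$\Rightarrow$(i), and (iii)$\Rightarrow$(i). The first two implications are essentially immediate from the definitions, while the converses reuse the machinery developed in Section~\ref{sec:details} of the paper.

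For (i)$\Rightarrow$(ii): suppose $T$ is sound and $\psi$ is any sentence asserting inside $T$ that it is $\Delta$, i.e. $T\vdash \psi\!\leftrightarrow\!\Delta(\#\psi)$. By soundness the biconditional is true in $\mathbb{N}$. Since $\Delta$ is self-fulfilling w.r.t.\ $T$, the sentence $\Delta(\#\psi)$ is true, whence $\psi$ is true. The argument for (i)$\Rightarrow$(iii) is the mirror image: if $T\vdash\psi\!\leftrightarrow\!\Theta(\#\psi)$, then soundness transfers the biconditional to $\mathbb{N}$, and self-falsification makes $\Theta(\#\psi)$ false in $\mathbb{N}$, hence $\psi$ false.

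For the converses, assume $T$ is unsound and fix a false $T$-provable sentence $\varphi$. Then, for the failure of (ii), I would copy the construction of Step~1 in \S\ref{sec:details} almost verbatim: apply the diagonal lemma to the formula $\varphi\!\leftrightarrow\!\Delta(x)$ to obtain a sentence $\psi$ with $\textit{Q}\vdash \psi\!\leftrightarrow\![\varphi\!\leftrightarrow\!\Delta(\#\psi)]$. Propositional logic plus $T\vdash\varphi$ yields $T\vdash\psi\!\leftrightarrow\!\Delta(\#\psi)$, so $\psi$ asserts inside $T$ that it is $\Delta$; and the standard-model reading of the diagonal biconditional together with $\mathbb{N}\nvDash\varphi$ yields $\mathbb{N}\vDash\psi\!\leftrightarrow\!\neg\Delta(\#\psi)$. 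Now invoking that $\Delta$ is self-fulfilling, $\Delta(\#\psi)$ must be true in $\mathbb{N}$, forcing $\psi$ to be false---a witness to the failure of (ii). The argument for $\neg$(iii)$\Rightarrow$$\neg$(i) runs identically with $\Delta$ replaced by $\Theta$: the constructed $\psi$ again asserts inside $T$ that it is $\Theta$ and satisfies $\mathbb{N}\vDash\psi\!\leftrightarrow\!\neg\Theta(\#\psi)$; since $\Theta$ is self-falsifying, $\Theta(\#\psi)$ is false, so $\psi$ is true, contradicting (iii).

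There is no substantial obstacle: the definitions of self-fulfilling and self-falsifying are tailored precisely so that the two halves of the $(\ast)$/$(\ast\!\ast)$ construction combine correctly. The only thing worth checking carefully is that the construction works for \emph{any} predicate of the appropriate kind (not just the provability-style ones discussed earlier), but inspection of the diagonalization shows that $\Delta$ (and $\Theta$) enter only as a formula with one free variable, so no complexity hypothesis is needed. Consistency of $T$ is used implicitly only to ensure that unsoundness amounts to having a provable false sentence rather than everything becoming provable trivially; containment of $\textit{Q}$ is used to guarantee availability of the diagonal lemma.
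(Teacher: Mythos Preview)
Your proposal is correct and follows exactly the route indicated by the paper, which merely remarks that ``our argument in \S\ref{sec:details} actually proves'' the proposition: you recycle the $(\ast)$/$(\ast\!\ast)$ diagonalisation and then plug in the self-fulfilling (respectively self-falsifying) hypothesis in place of the specific provability predicates. One small slip: in the last paragraph you label the final direction ``$\neg$(iii)$\Rightarrow$$\neg$(i)'' when the argument you give is in fact $\neg$(i)$\Rightarrow$$\neg$(iii); the content, however, is right.
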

We remark, without offering a proof, that there are predicates that are neither self-fulfilling nor self-falsifying---examples include the predicates {\em is a universal sentence} and {\em is an existential sentence}.

\section{False G\"odelian sentences.}\label{app:b}
In \ref{subsubsec:truesound} above,
we proved that a theory is sound if and only if all its G\"odelian sentences are true. Perhaps more interestingly, every false sentence can be a G\"odelian sentence of a sufficiently strong sound theory. This follows from the next proposition.
\begin{proposition}
\textit{A sentence is $T$-unprovable if and only if it is a G\"odelian sentence of a consistent extension  of $T$.}
\end{proposition}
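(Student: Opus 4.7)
The plan is to prove both directions using standard G\"odelian machinery, with the hard direction handled by a parametrised diagonal-lemma construction.

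For the easy direction ($\Leftarrow$), suppose $\varphi$ is a G\"odelian sentence of a consistent extension $S\supseteq T$, so $S\vdash\varphi\leftrightarrow\neg\mathrm{Pr}_S(\#\varphi)$. I would imitate G\"odel's first theorem directly: if $S\vdash\varphi$, then by $\Sigma_1$-completeness $S\vdash\mathrm{Pr}_S(\#\varphi)$, while the G\"odelian biconditional gives $S\vdash\neg\mathrm{Pr}_S(\#\varphi)$, contradicting consistency. Hence $S\nvdash\varphi$, and since $S\supseteq T$, also $T\nvdash\varphi$.

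For the hard direction ($\Rightarrow$), given a $T$-unprovable sentence $\varphi$, I would construct $S$ as a one-axiom extension $S=T+\sigma$, where $\sigma$ is obtained by a diagonal argument whose fixed point refers to provability in the extended theory. Concretely, let $\mathrm{Prv}(x,y)$ be the canonical two-variable predicate expressing ``$y$ is provable from the axioms of $T$ together with the sentence whose code is $x$''; for any fixed sentence $\sigma$, the formula $\mathrm{Prv}(\#\sigma,y)$ serves as a provability predicate for $T+\sigma$. Apply the diagonal lemma to the formula $\theta(x):=[\varphi\leftrightarrow\neg\mathrm{Prv}(x,\#\varphi)]$ to obtain $\sigma$ with
\[
T\vdash\sigma\leftrightarrow\bigl[\varphi\leftrightarrow\neg\mathrm{Pr}_{T+\sigma}(\#\varphi)\bigr].
\]
Setting $S=T+\sigma$, the axiom $\sigma$ is $S$-provable, so $S\vdash\varphi\leftrightarrow\neg\mathrm{Pr}_S(\#\varphi)$, exhibiting $\varphi$ as a G\"odelian sentence of $S$.

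It remains to verify that $S$ is consistent, and this is where I expect the only real subtlety to lie. If $S$ were inconsistent, then $T\vdash\neg\sigma$, whence $T\vdash\varphi\leftrightarrow\mathrm{Pr}_S(\#\varphi)$; but an inconsistent $S$ proves every sentence, in particular $\varphi$, so by $\Sigma_1$-completeness of $T$ we would have $T\vdash\mathrm{Pr}_S(\#\varphi)$ and therefore $T\vdash\varphi$, contradicting the hypothesis that $\varphi$ is $T$-unprovable. The main obstacle, then, is simply to set up the parametrised provability predicate cleanly enough that the diagonal lemma yields a sentence which genuinely refers to the provability of $\varphi$ in the very extension it defines; once that self-referential bookkeeping is done, both the G\"odelian biconditional and the consistency argument fall out in a few lines.
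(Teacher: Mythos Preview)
The paper omits the proof of this proposition (the appendix explicitly says ``most of the proofs are omitted''), so there is no authorial argument to compare against. That said, your proposal is correct and is in fact the natural proof.

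Your $(\Leftarrow)$ direction is just G\"odel's first theorem for $S$, and your $(\Rightarrow)$ direction via a parametrised diagonal lemma is exactly the right device for breaking the apparent circularity in defining an extension $S$ such that $\varphi$ is equivalent, \emph{in $S$}, to its own $S$-unprovability. The consistency argument is clean: from $T\vdash\neg\sigma$ you correctly derive $T\vdash\varphi\leftrightarrow\mathrm{Pr}_{T+\sigma}(\#\varphi)$, and the inconsistency of $T+\sigma$ makes $\mathrm{Pr}_{T+\sigma}(\#\varphi)$ a true $\Sigma_1$ sentence, hence $T$-provable, forcing $T\vdash\varphi$. Two minor remarks: first, the argument uses only $\Sigma_1$-completeness of $T$ (not the full derivability conditions needed elsewhere in the paper), so it works for any recursively enumerable $T\supseteq Q$; second, the ``bookkeeping'' worry you flag is genuine but routine---since $\mathrm{Prv}(x,y)$ can be written as a primitive-recursive-in-$x$ $\Sigma_1$ formula uniformly in the code of the added axiom, the fixed point $\sigma$ really does denote provability in $T+\sigma$ once $\#\sigma$ is substituted.
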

\begin{corollary}
\textit{For every sound theory $S$ and every false sentence $\varphi$, there exists a consistent extension $T$ of $S$ such that $\varphi$ is a G\"odelian sentence of  $T$.}
\end{corollary}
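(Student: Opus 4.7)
The Corollary is an essentially immediate consequence of the preceding Proposition together with the definition of soundness. The plan is as follows.

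First, I would establish that the false sentence $\varphi$ cannot be provable in $S$. This is simply unpacking soundness: a theory is sound precisely when every one of its theorems is true in $\mathbb{N}$. Since $\mathbb{N}\nvDash\varphi$ by hypothesis, and every $S$-theorem is true in $\mathbb{N}$ by soundness, it follows at once by contraposition that $S\nvdash\varphi$.

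Second, I would apply the preceding Proposition to the pair $(S,\varphi)$. That proposition asserts the equivalence: a sentence is $S$-unprovable if and only if it is a G\"odelian sentence of some consistent extension of $S$. The left-hand side was just established in the previous step, so the right-hand side yields a consistent extension $T$ of $S$ in which $\varphi$ is G\"odelian. This $T$ is exactly what the Corollary promises.

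Since the Corollary follows so cleanly from the Proposition, there is no real obstacle at this level---the only genuinely nontrivial content has been packaged into the Proposition itself. Were one to open that box, the difficult direction (from $S$-unprovability of $\varphi$ to the existence of a consistent extension $T$ with $T\vdash \varphi\!\leftrightarrow\!\neg\mathrm{Pr}_T(\#\varphi)$) would presumably be handled by a Kleene-style fixed-point construction: use the recursion theorem to find a recursive axiomatisation of $T := S\cup\{\varphi\!\leftrightarrow\!\neg\mathrm{Pr}_T(\#\varphi)\}$, and then verify consistency by arguing that an inconsistency would force $T\vdash\varphi$ and simultaneously, via the self-referential axiom, $T\vdash\neg\mathrm{Pr}_T(\#\varphi)$, contradicting the $S$-unprovability of $\varphi$ together with the $\Sigma_1$-completeness of the base theory. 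But that argument belongs to the proof of the Proposition, not to the proof proposal for the Corollary.
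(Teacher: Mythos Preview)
Your proposal is correct and matches the paper's approach: the paper simply notes that the Corollary follows from the preceding Proposition, and your two-step argument (soundness of $S$ gives $S\nvdash\varphi$, then the Proposition yields the desired consistent extension $T$) is exactly how that inference unpacks. Your additional remarks on how the Proposition itself might be proved are reasonable speculation but, as you rightly say, lie outside the Corollary's proof.
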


\section{More on $\boldsymbol\omega$-consistency.}\label{app:c}
Which levels of soundness are guaranteed by $\omega$-consistency? Which G\"odelian sentences of $\omega$-consistent theories are true? Our answer generalizes \cite[Theorem 17]{Isaacson2011}.
\begin{proposition}\label{prop}{\em
Let $T$ be an $\omega$-consistent extension of $\textit{Q}$. Then every $T$-provable $\Pi_3$-sentence is true, and so is every G\"odelian $\Pi_3$-sentence of $T$.
}\end{proposition}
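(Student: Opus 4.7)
The plan is to establish the proposition in two steps: first, that $\omega$-consistency of any extension of $\textit{Q}$ entails $\Pi_3$-soundness, and then, as a near-immediate corollary, that every G\"odelian $\Pi_3$-sentence of $T$ is true. For the first step, suppose toward a contradiction that $T\vdash\forall x\exists y\forall z\,\phi(x,y,z)$ for some $\Delta_0$-formula $\phi$, while the sentence fails in $\mathbb{N}$. Then there is an $n_0\in\mathbb{N}$ with $\mathbb{N}\vDash\forall y\exists z\,\neg\phi(\mathbf{n_0},y,z)$; in particular, for every natural $m$, the $\Sigma_1$-sentence $\exists z\,\neg\phi(\mathbf{n_0},\mathbf{m},z)$ is true in $\mathbb{N}$. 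By the $\Sigma_1$-completeness of $\textit{Q}$---hence of $T$---$T$ proves each such sentence. Setting $\xi(y):=\exists z\,\neg\phi(\mathbf{n_0},y,z)$, we have $T\vdash\xi(\mathbf{m})$ for every $m$. On the other hand, universally instantiating the hypothesized theorem at $x=\mathbf{n_0}$ gives $T\vdash\exists y\forall z\,\phi(\mathbf{n_0},y,z)$, which is $T\vdash\exists y\,\neg\xi(y)$. This is exactly the configuration forbidden by the definition of $\omega$-consistency, and the desired contradiction.

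For the second part, let $\gamma$ be a G\"odelian $\Pi_3$-sentence of $T$, so $T\vdash\gamma\!\leftrightarrow\!\neg{\rm Pr}_T(\#\gamma)$. Since $\omega$-consistency entails simple consistency, G\"odel's first-incompleteness argument delivers $T\nvdash\gamma$, whence $\neg{\rm Pr}_T(\#\gamma)$ is true in $\mathbb{N}$. To transport this truth to $\gamma$ itself, I would extract from the biconditional only its half $T\vdash\neg{\rm Pr}_T(\#\gamma)\rightarrow\gamma$ and check that this implication is (equivalent to) a $\Pi_3$-sentence. Indeed, it is logically $\Sigma_1\vee\Pi_3$; using the standard equivalences $\forall xP(x)\vee\forall yQ(y)\equiv\forall xy(P(x)\vee Q(y))$ and its existential dual (applied to fresh variables), one sees that each $\Pi_n$ class is closed under disjunction and that $\Sigma_1\subseteq\Pi_3$, so the implication lies in $\Pi_3$. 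The first step now supplies the truth of the implication in $\mathbb{N}$; combined with the truth of its antecedent $\neg{\rm Pr}_T(\#\gamma)$, this forces $\gamma$ to be true.

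The main obstacle is the first step, where the exact level of the arithmetical hierarchy reachable by $\omega$-consistency must be pinpointed. The crux is that the failure of a $\Pi_3$-sentence in $\mathbb{N}$ can be witnessed by a uniform family of true $\Sigma_1$-instances, and $\Sigma_1$-completeness of $\textit{Q}$ then promotes these into the uniform $T$-provability required by the definition of $\omega$-consistency. Attempting the analogous argument one level higher---for $\Pi_4$-sentences---would require $\Sigma_2$-completeness, a resource that is simply not available, which is precisely why $\Pi_3$ is the natural threshold and why the proposition is a genuine strengthening of the result cited from \cite{Isaacson2011}.
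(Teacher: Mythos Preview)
The paper explicitly omits the proof of this proposition (the appendix opens with ``For the sake of brevity, most of the proofs are omitted''), so there is no in-paper argument to compare against. Your argument is correct and is the natural one: the first step is exactly how one shows that $\omega$-consistency forces $\Pi_3$-soundness (falsity of a $\Pi_3$ theorem yields a witness $n_0$ at which a uniform family of true $\Sigma_1$ instances arises, and $\Sigma_1$-completeness of $\textit{Q}$ converts those into the provabilities that clash with $\omega$-consistency), and the second step correctly transfers truth to $\gamma$ by observing that $\neg{\rm Pr}_T(\#\gamma)\rightarrow\gamma$ is logically equivalent to a $\Pi_3$ sentence. Your closing remark about why the argument breaks at $\Pi_4$ also matches the paper's framing of the result as a ``boundary'' one.

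Two cosmetic points you might tighten: (i) in Step~1 you tacitly assume a single variable in each quantifier block, which is harmless (contraction of like quantifiers, or pairing) but worth a word; (ii) the prenex equivalence $\forall xP(x)\vee\forall yQ(y)\equiv\forall x\forall y\,(P(x)\vee Q(y))$ that you invoke is a genuine classical validity (not merely an implication), so your closure-under-disjunction claim for $\Pi_n$ goes through at the level of logical equivalence, which is what you need so that $T$ proves the $\Pi_3$ prenex form as well.
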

This is a boundary result, since, as we have already mentioned in \ref{subsec:omega}, $\omega$-consistent theories may have false provable $\Sigma_3$-sentences; see \cite[Proposition 19]{Isaacson2011}. By Lemma~\ref{lem} in  \ref{subsubsec:truesound} above,  the conjunction of that provable false $\Sigma_3$-sentence with an arbitrary G\"odelian $\Pi_1$-sentence results in a false G\"odelian $\Sigma_3$-sentence. Hence $\omega$-consistent theories may have false G\"odelian $\Sigma_3$-sentences, though all of their G\"odelian $\Pi_3$-sentences are true by Proposition~\ref{prop}.

We hope by now the reader shares our view concerning the fallacious (or enthymematic) character of the passage we quoted from \cite[p.~151]{Godel1931} whose logical form we recognized as the $(1)$-$(2)$-$(3)$ scheme---he or she may now produce what G\"odel should have written instead (say by adding a premise about the complexity of the relevant predicate and sentence, or using a kind of direct self-reference \`{a} la Kripke).

Let us conclude by noting that G\"odel's argument does in fact contain a {\em redundant} premise. Recall that we formalized G\"odel's argument thus:
\begin{itemize}\itemindent=1em
\item[(4)] $T\vdash G\!\leftrightarrow\!\neg{\rm Pr}_T(\#G)$
\item[(5)] $\mathbb{N}\vDash\neg{\rm Pr}_T(\#G)$
\item[]   {\em therefore}
\item[(6)] $\mathbb{N}\vDash G$,
\end{itemize}
Now what we said in this Appendix shows that (5) is actually redundant, for {\em is unprovable} is a self-fulfilling predicate for consistent theories.

\end{document}